\newcommand{\hh}{\mathbb{H}^3}
\newcommand{\len}{\mathop{\mathrm{len}}}
\newtheorem{lem}{Lemma}
\newtheorem*{thmn}{Theorem}
\newtheorem{thm}{Theorem}
\newtheorem*{thm_main}{Theorem 2}
\title{Lengths of edges in carrier graphs}
\author{Michael Siler}
\begin{document}

\maketitle
\begin{abstract}
\noindent We show that if $X$ is a minimal length carrier graph in a hyperbolic
3-manifold, $M$, then if $X$ contains a sufficiently short edge, it must contain
a short circuit, as well. The meaning of ``short'' depends only on the rank of
$\pi_1(M)$. We also expand the class of manifolds which are known to have
minimal length carrier graphs.
\end{abstract}

Carrier graphs were introduced as a tool to study hyperbolic 3-manifolds by
Matthew White in~\cite{Wh}, where he proved that an upper bound on the rank of
$\pi_1$ of a closed hyperbolic 3-manifold gives an upper bound on its minimal
injectivity radius. They were later used by Souto in~\cite{S} and then Biringer
in~\cite{B} to show that the rank of $\pi_1$ equals the Heegaard genus for large
classes of 3-manifolds fibering over the circle. Namazi and Souto used carrier
graphs to prove that rank equals genus for two handlebodies glued together via a
sufficiently large power of a pseudo-Anosov map~(\cite{NS}). Biringer and Souto
also used them to prove that there are only finitely many closed hyperbolic
3-manifolds with a given upper bound on rank and lower bounds on injectivity
radius and first eigenvalue of the Laplacian~(\cite{BS}).

Despite this interest in carrier graphs, relatively little is known about their
apparently very nice geometry. For instance, there does not seem to be a single
concrete example of a minimal length carrier graph in the literature. It seems
likely that a more thorough understanding of their geometry would yield more
results about hyperbolic 3-manifolds, especially regarding their rank. Our main
result, Theorem~\ref{main}, says roughly that for a hyperbolic 3-manifold $M$
with, if a minimal length carrier graph has a very short edge, then the graph
must have a short circuit. More precisely, we show

\begin{thm_main}
Suppose $M$ is a hyperbolic 3-manifold and $f\!:\!X\to M$ is a minimal length
carrier graph. Let $k=\mathrm{rank}\,\pi_1(M)$ and assume $k>1$. For every
$r>0$, there exists $l>0$ such that if every circuit in $X$ has length greater
than $r$, then every edge in $X$ has length at least $l$. The value of $l$
depends only on $r$ and $k$.
\end{thm_main}

In addition, we emphasize that such an $l$ can be defined explicitly in terms of
$r$.

In Section~\ref{prelim}, we give basic definitions and lemmas regarding carrier
graphs. We also present a theorem generalizing White's result in~\cite{Wh} that
closed hyperbolic 3-manifolds have minimal length carrier graphs. In
Section~\ref{main-section}, we prove Theorem~\ref{main}, which has as an
immediate corollary that a lower bound on the injectivity radius of $M$ gives a
lower bound on the length of any edge in a minimal length carrier graph for $M$.
Finally, in Section~\ref{example}, we give an example to show that the main
result is stronger than this corollary.

\textbf{Acknowledgements.} The author appreciates the feedback received from
Daniel Groves, Jason DeBlois and the referee. In addition, the
author gratefully acknowledges Ian Agol, Ian Biringer, and Juan Souto, for each
independently providing the example in Section~\ref{example}. The author is
especially grateful for the help and guidance provided by his thesis advisor,
Peter Shalen.

\section{Carrier Graphs}\label{prelim}

For the remainder of the article, $M$ will be a hyperbolic
3-manifold. We fix an identification of $M$ with $\hh/\Gamma$, for some
discrete, torsion-free group $\Gamma<\mathrm{Isom}^+(\hh)$. A \emph{carrier
graph} is a finite, trivalent graph $X$ along with a map $f\!:\!X\to M$, with
$f_*\!:\!\pi_1(X)\to\pi_1(M)$ surjective. We will assume that all carrier graphs
have $\mathrm{rank}\,\pi_1(X) = \mathrm{rank}\,\pi_1(M)$. As pointed out
in~\cite{Wh}, the fact that $X$ has the minimal possible rank implies that
simple closed curves in $X$ map essentially under $f$. This is because a simple
closed curve in $X$ is part of a minimal cardinality generating set of
$\pi_1(X)$, so if it died in $M$, we could break the curve and get a
$\pi_1$-surjective map of a lower rank graph into $M$.

We define the length of an edge $e$ of $X$ (relative to the map $f$) as the
length of the path $f|_e$. Then $\len_f(X)$ is the sum of lengths of the edges
of $X$. A \emph{minimal length carrier graph} is a carrier graph $f$ such that
for all carrier graphs $g$, $\len_f(X)\le \len_{g}(X)$.

\begin{thmn}[White~\cite{Wh}]
If $M$ is a closed, hyperbolic 3-manifold, then $M$ has a minimal length carrier
graph. In addition, if $f\!:\!X\to M$ is a minimal length carrier graph for any
hyperbolic 3-manifold $M$ (closed or not), then:
\begin{enumerate}
\item the edges of $X$ map to geodesics;
\item every edge of $X$ has positive length; in particular, the images of
vertices are still trivalent;
\item edges adjacent to the same vertex meet at an angle of $2\pi/3$.
\end{enumerate}
\end{thmn}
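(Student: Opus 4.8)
The plan is to treat this in three stages: existence of a minimizer when $M$ is closed, and then the local properties $(1)$--$(3)$ of any minimizer. For existence, I would start from the fact that there are only finitely many isomorphism types of connected trivalent graphs $X$ with $\mathrm{rank}\,\pi_1(X)=k$ (each such $X$ has $2k-2$ vertices and $3k-3$ edges). Pick carrier graphs $f_n\!:\!X_n\to M$ with $\len_{f_n}(X_n)$ decreasing to the infimum $L$ of length over all carrier graphs; passing to a subsequence we may take the underlying graph $X_n=X$ fixed, and replacing each $f_n|_e$ by the geodesic in its homotopy class rel endpoints (which does not increase length, does not move vertices, and does not change $(f_n)_*$) we may assume every edge of $f_n$ is a geodesic and $\len_{f_n}(X)\le L+1$. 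Since $M$ is closed, it is compact, so after a further subsequence $f_n(v)\to p_v$ in $M$ for every vertex $v$. For each edge $e$ the geodesic $f_n|_e$ has length $\le L+1$ and joins convergent sequences of points; lifting to $\hh$ and using that $\Gamma$ acts properly discontinuously, only finitely many homotopy classes rel endpoints occur along the sequence, so after one last subsequence the induced homomorphism $\pi_1(X)\to\pi_1(M)$ stabilizes and the geodesics $f_n|_e$ converge to a geodesic $\sigma_e$ from $p_u$ to $p_v$ of length $\lim_n\len(f_n|_e)$. The map $f\!:\!X\to M$ with vertices at the $p_v$ and edges the $\sigma_e$ then has the same (surjective) induced homomorphism as $f_n$ for $n$ large and satisfies $\len_f(X)=L$, so it is a minimal length carrier graph.

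Now let $f\!:\!X\to M$ be any minimal length carrier graph. Property $(1)$ is immediate: if some $f|_e$ were not a geodesic, replacing it by the geodesic in its class rel endpoints would strictly shorten $X$ without changing $f_*$ or the other edges, contradicting minimality. For property $(3)$, I would fix a vertex $w$; granting $(2)$, its three incident edges have positive length, so near $f(w)$ the geodesic representatives vary smoothly with the position of $f(w)$ and the first variation of arc length shows that the gradient of $\len_f(X)$ with respect to $f(w)$ is $-\sum_{e\ni w}u_e$, where $u_e$ is the unit vector at $f(w)$ tangent to $f|_e$ and pointing toward the opposite endpoint. Minimality forces $\sum_{e\ni w}u_e=0$, and three unit vectors $u_1,u_2,u_3$ with $u_1+u_2+u_3=0$ satisfy $\langle u_i,u_j\rangle=-\tfrac12$ for every pair; hence the incident edges leave $f(w)$ in three distinct directions at pairwise angle $2\pi/3$, which also yields the ``trivalent image'' clause once $(2)$ is in hand.

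Property $(2)$ is where I expect the real work to be. Suppose an edge $e$ has $\len_f(e)=0$. It cannot be a loop at a vertex, since then $f|_e$ would be a nullhomotopic loop, contradicting the fact noted above that simple closed curves in $X$ map essentially; so $e$ joins distinct vertices $u,v$ with $f(u)=f(v)=:p$, and by $(1)$ the four other edges meeting $u$ or $v$ are geodesic germs at $p$ with unit tangent vectors $\tau_1,\tau_2,\tau_3,\tau_4$. Collapsing $e$ gives a graph $X/e$ with one $4$-valent vertex through which $f$ factors, and the three ways of re-expanding that vertex into two trivalent vertices joined by an edge produce carrier graphs $X_1,X_2,X_3$ (collapsing and expanding an edge are homotopy equivalences, so the induced maps remain surjective of rank $k$), each of length $L=\len_f(X)$ when the new edge is sent to $p$. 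These three expansions realize each of the six pairs $\{\tau_i,\tau_j\}$ exactly once as the pair of tangent germs at an endpoint of the new edge. Since $\sum_{i<j}\langle\tau_i,\tau_j\rangle=\tfrac12\bigl(|\tau_1+\tau_2+\tau_3+\tau_4|^2-4\bigr)\ge-2$, some pair has $\langle\tau_i,\tau_j\rangle>-\tfrac12$, i.e.\ $|\tau_i+\tau_j|>1$; in the corresponding $X_m$, pushing that endpoint a distance $t$ in the direction $\tau_i+\tau_j$ lengthens the new edge at first-order rate $1$ while the two edges with germs $\tau_i,\tau_j$ jointly shorten at rate $|\tau_i+\tau_j|>1$, so $\len(X_m)<L$ for small $t>0$, a contradiction. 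Hence every edge has positive length. The main obstacle in the argument is precisely this last step: the naive first variation at the degenerate vertex does not by itself produce a shortening, and one needs the re-pairing trick together with the elementary fact that four unit vectors in $\mathbb{R}^3$ cannot all be pairwise at angle $\ge 2\pi/3$.
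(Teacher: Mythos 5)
The paper does not actually prove this statement; it is quoted from White \cite{Wh}, and the paper only describes the key ingredients (the Arzel\`a--Ascoli compactness argument for existence and the triod ``shortening procedure'' for items (2) and (3)). Measured against that, your outline follows essentially the same strategy: your existence argument (finitely many trivalent graphs of rank $k$, geodesic representatives, compactness of the closed manifold $M$, proper discontinuity to stabilize the edge holonomies so the limit is still $\pi_1$-surjective) is sound, property (1) is indeed immediate, and your first-variation proof of (3) (the gradient $-\sum_e u_e$ must vanish, and three unit vectors summing to zero are pairwise at angle $2\pi/3$) is correct.

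There is, however, a genuine gap in your proof of (2). You assume that the four edge-ends at $u$ and $v$ other than $e$ are ``geodesic germs at $p$ with unit tangent vectors $\tau_1,\tau_2,\tau_3,\tau_4$,'' i.e.\ that all edges adjacent to the zero-length edge $e$ have positive length. Nothing rules out several adjacent zero-length edges (e.g.\ a path of two of them), and in that situation no choice of $e$ makes your hypothesis hold: some $\tau_i$ is simply undefined and the re-pairing/first-variation computation breaks down. The repair is to work with a maximal connected union $T$ of zero-length edges: $T$ is a tree (a circuit of zero-length edges would map to a point, contradicting the fact that simple closed curves in $X$ map essentially), every edge adjacent to $T$ but not in it has positive length by maximality, and if $T$ has $t\ge 1$ edges it has $t+3\ge 4$ emanating edge-ends, whose unit tangents at $p$ satisfy $\sum_{i<j}\langle\tau_i,\tau_j\rangle\ge -n/2$, so some pair has inner product $\ge -1/(n-1)>-\tfrac12$. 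One then collapses $T$, re-expands the resulting high-valence vertex into a trivalent tree mapped to $p$ in which that favorable pair shares a vertex with one new edge, and runs your pushing argument at that vertex. This is exactly the tree-collapse-and-cone maneuver the paper itself carries out (with the subgraph $S$ and the cone $C$) in the proof of its Theorem 2, so the fix is in the spirit of what you wrote; but as stated your argument covers only the case of an isolated zero-length edge.
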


Our main result is only useful when minimal length carrier graphs actually
exist, so we will extend White's proof of the existence of minimal length
carrier graphs to a larger class of 3-manifolds. For definitions of compression
body and NP-end, see~\cite{BB} and~\cite{MT}, respectively. An $NP$-end is
essentially a topological end of $M\setminus\{\text{cusps of $M$}\}$.

\begin{thm}
Let $M$ be an orientable, hyperbolic 3-manifold such that $\pi_1(M)$ is finitely
generated and nonabelian. If $M$ does not contain a minimal length carrier
graph, then it has a compact core which is a compression body $C$. Furthermore,
the topological end of $M$ corresponding to the positive boundary of $C$
contains a $\pi_1$-surjective simply degenerate NP-end.
\end{thm}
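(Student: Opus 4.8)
The plan is to run the infimizing‑sequence/compactness argument behind White's theorem above, and then to identify precisely what must go wrong when it fails. Suppose $M$ has no minimal length carrier graph. Since $\pi_1(M)$ is finitely generated, carrier graphs exist (realize a generating set by a rose and replace it by a homotopy‑equivalent trivalent graph), so the infimum $\ell$ of $\len_g$ over all carrier graphs $g$ is finite. Choose an infimizing sequence $f_n\colon X_n\to M$; straightening each edge to the geodesic in its homotopy class rel endpoints does not increase length, so we may assume every edge is a geodesic, and as there are only finitely many trivalent graphs of rank $k=\mathrm{rank}\,\pi_1(M)$, after passing to a subsequence we may take $X_n=X$ fixed.

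First I would dispose of the case in which the images do not escape to infinity. If infinitely many of the sets $f_n(X)$ meet a fixed compact set, then---since for large $n$ each $f_n(X)$ is connected of length, hence diameter, at most $\ell+1$---a subsequence has all images inside a fixed compact set $K$. The edges of $f_n$, parametrized at constant speed, are then uniformly Lipschitz maps $[0,1]\to K$, so the Arzel\`a--Ascoli theorem yields a subsequential uniform limit $f\colon X\to M$ with geodesic edges and $\len_f(X)=\ell$. Since $\inj$ is bounded below on compact sets, $f_n\simeq f$ for large $n$, so $f_*$ is surjective and $f$ is a minimal length carrier graph, contrary to hypothesis. Hence $f_n(X)$ leaves every compact set of $M$.

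Next I would locate the end into which the graphs escape. By tameness, $M$ has a compact core $C$; choose it so that $M\setminus\mathrm{int}\,C$ is a disjoint union of cusp neighborhoods and neighborhoods $U_i\cong S_i\times[0,\infty)$ of the remaining ends, and so that $C$ contains the thick part of the convex core of $M$. For large $n$, $f_n(X)\subset M\setminus C$; being connected it lies in a single component, and by pigeonhole, on a subsequence, in one fixed component $W$. Because the image $(f_n)_*\pi_1(X)=\pi_1(M)$ is nonabelian while cusp neighborhoods have abelian fundamental group, $W$ must be some $U_i$; then $\pi_1(S_i)\cong\pi_1(U_i)\to\pi_1(M)$ is surjective, so the end $E$ carried by $U_i$ is a $\pi_1$‑surjective NP‑end. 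By tameness $E$ is geometrically finite or simply degenerate. If it were geometrically finite, then $f_n(X)$, lying outside $C$ and---since Margulis tubes and cusp regions have abelian $\pi_1$---away from the thin parts, would be disjoint from the convex core, and composing $f_n$ with the $1$‑Lipschitz nearest‑point retraction onto the convex core together with a length‑nonincreasing push out of deep cusp regions would produce an infimizing sequence confined to a compact set, contradicting the previous paragraph. I expect this step---ruling out a geometrically finite end via these convex‑core and cusp retraction estimates---to be the main obstacle, since it is exactly the geometry that White did not need in the closed case. Therefore $E$ is simply degenerate.

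Finally, the topological conclusion. Since $M$ is hyperbolic it is irreducible, and hence so is its compact core $C$; the component $S_i$ of $\partial C$ facing $E$ induces a surjection $\pi_1(S_i)\to\pi_1(C)\cong\pi_1(M)$. A compact, orientable, irreducible $3$‑manifold that has a boundary component whose fundamental group surjects onto that of the manifold is a compression body with that component as positive boundary (Bonahon; Canary--McCullough). Hence $C$ is a compression body, and the $\pi_1$‑surjective simply degenerate NP‑end $E$ found above lies in the topological end of $M$ corresponding to the positive boundary $\partial_+C=S_i$, as claimed.
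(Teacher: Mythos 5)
Your overall scheme (infimizing sequence, Arzel\`a--Ascoli in the compact case, locating the escape into an end, compression body from a $\pi_1$-surjective boundary component) matches the paper's, but the cusped case contains genuine gaps. First, your structural claim that a compact core $C$ can be chosen so that $M\setminus\mathrm{int}\,C$ is a \emph{disjoint} union of cusp neighborhoods and product neighborhoods $U_i\cong S_i\times[0,\infty)$ of the NP-ends is false in general: cusps and NP-ends can live in the \emph{same} topological end of $M$ (e.g.\ the fiber cover of a punctured-torus bundle, which is the interior of a genus-2 handlebody whose single topological end contains one cusp and two degenerate NP-ends), so a component $W$ of the complement can contain cusp regions and several NP-end neighborhoods at once. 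Consequently your pigeonhole step ``$W$ must be some $U_i$'' does not follow, and the $\pi_1$-surjectivity of an NP-end is not yet established: the graphs could dip into cusp regions or straddle several NP-end neighborhoods within one topological end. This is exactly why the paper first works with the topological ends of $M$ (closed components of $\partial C$) to get the compression body, and then composes the graphs with the retraction $\rho$ out of the cusps, using the bound on cusp depth (the graphs cannot lie entirely in a cusp, so their depth is at most their length) to obtain bounded-length carrier graphs that avoid fixed cusp neighborhoods and hence eventually lie in a single NP-end. Relatedly, your requirement that $C$ contain the thick part of the convex core cannot be met when $M$ has a degenerate end, since that set is then noncompact.

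Second, your exclusion of the geometrically finite case is flawed as written. That the graphs are ``away from the thin parts'' does not follow from thin parts having abelian $\pi_1$ --- that only rules out the graph being \emph{entirely} contained in one thin component --- and ``lying outside $C$'' does not imply lying outside the convex core. More seriously, the ``length-nonincreasing push out of deep cusp regions'' does not exist: the vertical projection of a cusp neighborhood onto its boundary horosphere scales horizontal lengths up by the exponential of the depth, so it is only boundedly Lipschitz once a depth bound is in hand (this is how the paper uses it). Without length-nonincreasing maps your retracted sequence need not be infimizing, and a bounded-length sequence trapped in a compact set gives no contradiction with the nonexistence of a \emph{minimal} carrier graph. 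The paper avoids all of this by a direct geometric argument: in a geometrically finite end the injectivity radius away from cusps tends to infinity as one exits the end, so essential loops in the (cusp-retracted) graphs would force their lengths to blow up, contradicting the length bound. Your convex-core retraction idea could likely be repaired, but it requires precisely the depth-bound and ``eventually entirely far out in the NP-end'' steps that your write-up omits.
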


\begin{proof}
Let $l$ be the infimum of the lengths of all carrier graphs for $M$ and let
$f_i:X_i\to M$ be a sequence of carrier graphs with geodesic edges and
$\lim_{i\to\infty}\len_{f_i}(X_i)=l$. In~\cite{Wh}, White applies the
Arzel\`a-Ascoli theorem to such a sequence and shows that the resulting limit is
a minimal length carrier graph. For his argument to work, it is sufficient for
there to be a compact set $K$ containing $f_i(X_i)$ for all $i$. Suppose that
$M$ does not have a minimal length carrier graph and thus, that this condition
does not hold for any subsequence of $\{f_i\}$.

Because it converges, the sequence $\{\len_{f_i}(X_i)\}$ has an upper bound $L$.
There is a compact submanifold $C\subset M$ for which the inclusion map is a
homotopy equivalence (see~\cite{Sc}), and if fact, by the topological version of
the tameness theorem (\cite{A}, \cite{CG}), we can pick $C$ so that
$\overline{M\setminus C}=\partial C\times[0,\infty)$. The radius $L$
neighborhood $C_L$ of $C$ is also compact, and so for some $i_0$,
$f_{i_0}(X_{i_0})\not\subset C_L$. Since $\len_{f_{i_0}}(X_{i_0})\leq L$, we
have that $f_{i_0}(X_{i_0})\cap C=\emptyset$. Thus, $f_{i_0}(X_{i_0})$ is
contained in $S\times[0,\infty)$ for some component $S$ of $\partial C$. Since
$f_{i_0}$ is a carrier graph, it follows that the map $\pi_1(S)\to\pi_1(C)$
induced by inclusion is surjective, and in particular, the map $\pi_1(\partial
C)\to\pi_1(C)$ is surjective. Since $C$ is compact and has a $\pi_1$-surjective
boundary component, $C$ must be a compression body. For a proof of this
well-known fact, see~\cite{BB} Lemma~2.2.2.

Unless $C\cong(\text{surface})\times I$, it has only one $\pi_1$-surjective
boundary component $S$, and so $f_i(X_i)$ is eventually contained in the end
$S\times[0,\infty)$. If $C\cong(\text{surface})\times I$, then it is possible
that $f_i(X_i)$ lies in each of the two ends infinitely often. In that case,
we will assume that we have passed to a subsequence that lies entirely in one
end $S\times[0,\infty)$.

Let $M_0$ be the manifold obtained by removing standard neighborhoods of the
cusps of $M$. Using the upper halfspace model of hyperbolic space, each cusp has
a neighborhood isometric to the horoball $\{(x,y,z)\in\hh|z\geq Z\}$ modulo a
group of translations, where $Z>0$ depends on the cusp. There is a vertical
projection to the horosphere $\{(\ast,\ast,Z)\in\hh\}$ that descends to a
retraction $\rho\!:\!M\to M_0$. The composition of $\rho$ with any $f_i$ is
still a carrier graph for $M$. Since $f_i$ cannot map entirely into a cusp
(because then $\pi_1(M)$ would be abelian) and $\len_{f_i}(X_i)<L$, there is an
upper bound for the depth of $f_i(X_i)$, i.e. the $z$-coordinate, in any cusp.
This, combined with the upper bound on the length of $f_i(X_i)$, implies an
upper bound on the length of $\rho\circ f_i$. In addition, a point in a cusp
with distance $d$ from the boundary of the cusp gets moved a distance $d$ by
$\rho$. Since no point of $f_i(X_i)$ can have depth greater than $L$ in a cusp,
$\rho(f_i(X_i))$ is contained in a radius $L$ neighborhood of $f_i(X_i)$. Thus,
$\{\rho\circ f_i\}$ is a sequence of bounded length carrier graphs that
eventually leaves every compact set, and $\rho\circ f_i$ misses fixed
neighborhoods of every cusp. This implies that $\rho(f_i(X_i))$ is contained in
an NP-end for large $i$.

We now have that the end corresponding to the positive boundary of $C$ contains
an NP-end, which carries $\pi_1(M)$. Let $\mathcal{E}$ be an NP-end that
contains $\rho(f_i(X_i))$ for infinitely many $i$. According to the tameness
theorem, there are two possibilities for the geometry of $\mathcal{E}$: it is
geometrically finite or simply degenerate. Suppose $\mathcal{E}$ is
geometrically finite. Then $\mathcal{E}$ has a flaring geometry. In particular,
if $\mathcal{E}=S'\times[0,\infty)$, then it is easy to show that the
injectivity radius at any point away from any cusps in $S'\times[t,\infty)$,
goes to infinity as $t\to\infty$. Since $\rho(f_i(X_i))$ is not contained in any
standard cusp neighborhood, contains nontrivial loops, and is exiting
$\mathcal{E}$, the length of the carrier graph $\rho\circ f_i$ must being going
to infinity. This contradicts these graphs having bounded length. Hence,
$\mathcal{E}$ is simply degenerate.
\end{proof}

We will assume from now on that $f\!:\!X\to M$ is a minimal length carrier
graph.

Suppose two geodesic segments in $\hh$ with the same length meet at a shared
endpoint with angle $\varphi<2\pi/3$. We can replace the edges with a triod
(with the same endpoints) as shown in Figure~\ref{modify_pic}. A little
hyperbolic trigonometry shows (with the edge length labels from the figure) that
$2c > 2b+a$, so the triod is shorter. One can apply this procedure to a pair of
geodesic edges that share an endpoint in a carrier graph. Generally,
such a pair of edges will not be the same length, so we instead apply it to two
``edge segments'' (i.e. a subarc of an edge). Note that if an edge has both
endpoints at the same vertex, we may apply the procedure with edge segments from
the same edge, each with length up to half that of the full edge. This
shortening procedure is the key tool that White uses to prove points 2 and 3 in
the theorem above. We will need a stronger statement. Let $Sh(c,\varphi)$ be the
reduction in length after performing the shortening procedure on edge segments
of length $c$ meeting at angle $\varphi$. If $\varphi$ is understood, we will
use the notation $Sh(c)$. With some hyperbolic trigonometry, one can see that
$a$ and $b$ are functions of $c$ and $\varphi$. So
$Sh(c,\varphi)=2c-2b(c,\varphi)-a(c,\varphi)$.

\begin{figure}
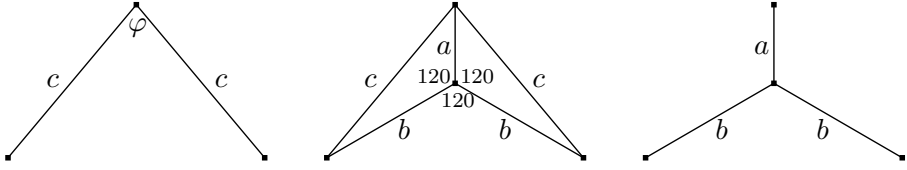

\begin{center}
\begin{lpic}{modpic(.64)}
\lbl[l]{9,17;$c$}
\lbl[l]{44,17;$c$}
\lbl[l]{26,28;$\varphi$}
\lbl[l]{75,17;$c$}
\lbl[l]{110,17;$c$}
\lbl[l]{90,24;$a$}
\lbl[l]{82,7;$b$}
\lbl[l]{103,7;$b$}
\lbl[l]{91,13;{\scriptsize 120}}
\lbl[l]{86,18;{\scriptsize 120}}
\lbl[l]{95,18;{\scriptsize 120}}
\lbl[l]{156,24;$a$}
\lbl[l]{148,7;$b$}
\lbl[l]{169,7;$b$}
\end{lpic}
\caption{Shortening procedure}\label{modify_pic}
\end{center}
\end{figure}

\begin{lem}\label{monotone}
For a fixed length $c$, if $\varphi<2\pi/3$, $Sh(c,\varphi)$ is a strictly
decreasing function of $\varphi$. For a fixed angle $\varphi<2\pi/3$,
$Sh(c,\varphi)$ is a strictly increasing function of $c$.
\end{lem}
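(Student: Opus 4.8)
My plan is to reinterpret the triod length $2b+a$ from Figure~\ref{modify_pic} as the length of the \emph{minimal Steiner tree} on three points. Let $v$ be the shared endpoint of the two original segments and $p_1,p_2$ their other endpoints, so that $d(v,p_1)=d(v,p_2)=c$ and $\angle p_1vp_2=\varphi$. Since $\varphi<2\pi/3$ and, by the isoceles symmetry of the triangle $p_1vp_2$, its base angles are each less than $\pi/2<2\pi/3$, all three angles of that triangle lie below $2\pi/3$; hence the minimal Steiner tree on $\{v,p_1,p_2\}$ is exactly the triod of Figure~\ref{modify_pic}, with $2\pi/3$ angles at its interior (Steiner) point. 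Writing $L(c,\varphi)$ for this minimal length, we have $Sh(c,\varphi)=2c-L(c,\varphi)$, so the two assertions of the lemma become: (i) for fixed $c$, the function $\varphi\mapsto L(c,\varphi)$ is strictly increasing; and (ii) for fixed $\varphi$ and $c_1<c_2$, we have $L(c_2,\varphi)-L(c_1,\varphi)<2(c_2-c_1)$.

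For (i) I would fix $c$ and take $\varphi_1<\varphi_2$. Place $v$ at a basepoint, $p_1$ along a fixed ray, and $p_2(\varphi)$ on the circle of radius $c$ about $v$ at angular coordinate $\varphi$. Let $S_2$ be the Steiner point of the $\varphi_2$-configuration; by symmetry it lies on the bisecting ray, at angular coordinate $\varphi_2/2$, and $S_2\neq v$ because $\varphi_2<2\pi/3$. Using $S_2$ as a (generally suboptimal) Steiner point for the $\varphi_1$-configuration produces a spanning triod of length $d(v,S_2)+d(p_1,S_2)+d(p_2(\varphi_1),S_2)\ge L(c,\varphi_1)$. By the hyperbolic law of cosines in the triangle $v\,S_2\,p_2(\varphi)$, the distance $d(p_2(\varphi),S_2)$ is a strictly increasing function of the angle $\angle S_2vp_2(\varphi)=|\varphi-\varphi_2/2|$ on $[0,\pi)$; and since $0<\varphi_1<\varphi_2$ we get $|\varphi_1-\varphi_2/2|<\varphi_2/2=|\varphi_2-\varphi_2/2|$, so $d(p_2(\varphi_1),S_2)<d(p_2(\varphi_2),S_2)$. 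Hence $L(c,\varphi_1)<d(v,S_2)+d(p_1,S_2)+d(p_2(\varphi_2),S_2)=L(c,\varphi_2)$, which gives (i).

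For (ii) I would fix $\varphi$ and $c_1<c_2$, and let $T_1$ be the optimal triod for the $c_1$-configuration, with Steiner point $S_1$. Adjoining to $T_1$ the two radial geodesic segments of length $c_2-c_1$ joining $p_i(c_1)$ to $p_i(c_2)$ gives a connected network spanning $\{v,p_1(c_2),p_2(c_2)\}$ of length $L(c_1,\varphi)+2(c_2-c_1)$, whence $L(c_2,\varphi)\le L(c_1,\varphi)+2(c_2-c_1)$. If equality held, this network would itself be a minimal Steiner tree for the $c_2$-configuration; but at the interior point $p_1(c_1)$ it meets the segments $p_1(c_1)S_1$ and $p_1(c_1)p_1(c_2)$ at the angle $\pi-\angle v\,p_1(c_1)\,S_1<\pi$ — the angle $\angle v\,p_1(c_1)\,S_1$ is positive because $S_1$, a Steiner point of the nondegenerate triangle $p_1(c_1)\,v\,p_2(c_1)$, does not lie on the ray $vp_1(c_1)$ — and such a corner can be shortcut to yield a strictly shorter spanning network, a contradiction. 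So the inequality in (ii) is strict.

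The routine ingredients here are the two "minimality of the Steiner tree" comparisons and the law-of-cosines computation; the step that needs genuine care is the \emph{strictness} in each case: in (i) it is the remark that $0<\varphi_1<\varphi_2$ keeps the relevant angle strictly inside the increasing range $[0,\pi)$, and in (ii) it is ruling out the degenerate equality via the "no unnecessary corner" property of length-minimizing networks. One should also verify at the outset that the configuration really lies in the regime where the minimal Steiner tree is the $2\pi/3$-triod rather than two sides of the triangle — which is exactly the point at which the hypothesis $\varphi<2\pi/3$ is used.
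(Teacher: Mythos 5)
Your argument is correct, but it is a genuinely different proof from the one in the paper. You recast the triod length $2b+a$ as the Fermat--Steiner length $L(c,\varphi)=\min_x\bigl(d(x,v)+d(x,p_1)+d(x,p_2)\bigr)$, and then get monotonicity in $\varphi$ by feeding the optimal center $S_2$ of the larger-angle configuration into the smaller-angle one (with strictness from the law of cosines, using $S_2\neq v$ and $0<\varphi_1<\varphi_2$), and monotonicity in $c$ by extending the optimal triod radially and smoothing the resulting corner at $p_1(c_1)$ (with strictness because that corner angle is less than $\pi$). The paper instead proves the $\varphi$-statement by calculus --- the hyperbolic law of sines shows $b$ increases in $\varphi$, and implicit differentiation of $\cosh c=\cosh a\cosh b+\tfrac12\sinh a\sinh b$ gives $da/db>-2$ --- and proves the $c$-statement by overlaying the two triod triangles as in Figure~\ref{c-monotone} and combining the triangle inequality with the basic inequality $2\len([x,y])>2\len([y,z])+\len([x,z])$ for a triangle with a $2\pi/3$ angle. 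Your route is computation-free, conceptually transparent, and would work verbatim in any Hadamard space; its cost is that it quotes the Fermat-point package in hyperbolic space (existence, uniqueness, interiority of the $2\pi/3$-point when all angles are below $2\pi/3$, and its minimality among spanning networks), which is exactly the identification of the paper's $120^\circ$ triod with the length-minimizing network. Those facts are standard (strict convexity and properness of $x\mapsto d(x,v)+d(x,p_1)+d(x,p_2)$ give a unique minimizer, interior precisely when all angles are under $2\pi/3$, with the $2\pi/3$ balance condition there), but you should state that justification, since the paper's own comparable claim ($2c>2b+a$) is what this lemma's machinery is meant to supplement; note also that in your step (ii) you can avoid the general ``no corner in a minimal network'' principle entirely by bounding the bent path $S_1\,p_1(c_1)\,p_1(c_2)$ below by $d(S_1,p_1(c_2))$, strictly, and then invoking only the Fermat characterization of $L(c_2,\varphi)$.
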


Hence, smaller angles and longer edges produce a greater reduction in length in
the shortening procedure.

\begin{proof}
Fixing $c$, we need to show that $2b+a$ is an increasing function of $\varphi$.
We know $c$ and its opposite angle, so the hyperbolic law of sines immediately
shows that $b$ is an increasing function of $\varphi$. Given $b$, we can compute
$a$. Thus, treating $b$ as a function of $\varphi$ and $a$ as a function of $b$,
we have:
\[
\frac{d}{d\varphi}(2b+a)=2\frac{db}{d\varphi}+\frac{da}{db}\frac{db}{d\varphi}.
\]
So it suffices to show that $\tfrac{da}{db}>-2$. Given that the angle opposite
$c$ is $2\pi/3$, the hyperbolic law of cosines says $\cosh c=\cosh a\cosh
b+\tfrac12\sinh a\sinh b$. Implicit differentiation with respect to $b$
yields
\begin{gather*}
0 = \sinh(a)a'\cosh(b)+\cosh(a)\sinh(b) + \frac12\cosh(a)a'\sinh(b)+\frac12\sinh(a)\cosh(b)\\
0 = a'(\sinh(a)\cosh(b)+\frac12\cosh(a)\sinh(b))+\cosh(a)\sinh(b)+\frac12\sinh(a)\cosh(b)\\
a' = -\frac{\cosh(a)\sinh(b)+\tfrac12\sinh(a)\cosh(b)}{\sinh(a)\cosh(b)+\tfrac12\cosh(a)\sinh(b)}
\end{gather*}
We want $a'>-2$, which is equivalent to
\begin{gather*}
\cosh(a)\sinh(b) + \frac12\sinh(a)\cosh(b)<2\sinh(a)\cosh(b)+\cosh(a)\sinh(b) \\
\frac12\sinh(a)\cosh(b)<2\sinh(a)\cosh(b)
\end{gather*}
which is true if $a\ne 0$. However, in the shortening procedure, $a=0$ if and
only if $\varphi=2\pi/3$; hence, the result follows.

Now fix $0<\varphi<2\pi/3$. Pick $c_1$ and $c_2$ with $0<c_1<c_2$. For $i=1,2$,
let $a_i=a(c_i,\varphi)$ and $b_i=b(c_i,\varphi)$. Using some hyperbolic
trigonometry, one can explicitly write down formulas for $a$ and $b$ as
functions of $c$ and $\varphi$. By (rather tediously) differentiating them, it
is not hard to prove that $a$ and $b$ are increasing functions of $c$, so
$a_1<a_2$ and $b_1<b_2$. Let $a'=a_2-a_1$ and $c'=c_2-c_1$. We wish to show that
$2c_2-2b_2-a_2>2c_1-2b_1-a_1$. This is equivalent to $c'+b_1>b_2+\frac12a'$.
Figure~\ref{modify_pic} shows two symmetric triangles which each give the
relationship between $a$, $b$ and $c$. Figure~\ref{c-monotone} shows the
corresponding triangles for $a_i$, $b_i$ and $c_i$ for $i=1,2$ on top of each
other.

\begin{figure}
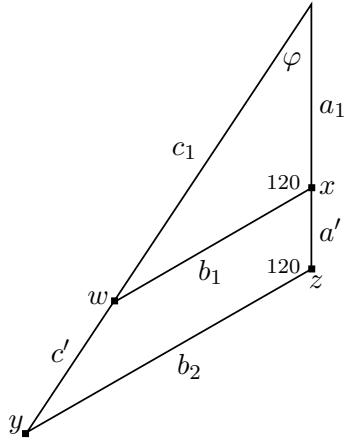

\begin{center}
\begin{lpic}{monotone(.7)}
\lbl[l]{57,63;$a_1$}
\lbl[l]{57,40;$a'$}
\lbl[l]{29,55;$c_1$}
\lbl[l]{6,17;$c'$}
\lbl[l]{34,32;$b_1$}
\lbl[l]{30,14;$b_2$}
\lbl[l]{47,33;{\scriptsize 120}}
\lbl[l]{47,49;{\scriptsize 120}}
\lbl[l]{50,72;$\varphi$}
\lbl[l]{13,27;$w$}
\lbl[l]{57,48;$x$}
\lbl[l]{-2,3;$y$}
\lbl[l]{55,30;$z$}
\end{lpic}
\caption{Shortening procedure triangles for $i=1$ (smaller) and $i=2$ (larger).}
\label{c-monotone}
\end{center}
\end{figure}

The quadrilateral $wxyz$ can be split into two triangles by inserting the
diagonal $[x,y]$. By the triangle inequality, we have $c'+b_1>\len([x,y])$. The
line segments $[x,y]$, $[y,z]$ and $[x,z]$ form a triangle in which the angle
opposite $[x,y]$ is $2\pi/3$. As noted above, the shortening procedure works
because of the observation that in such a triangle,
\[
2\len([x,y])-2\len([y,z])-\len([x,z])>0.
\]
Since $\len([y,z])=b_2$ and $\len([x,z])=a'$, it follows that
$c'+b_1>b_2+\frac12a'$, which was our goal.
\end{proof}

We now give two simple lemmas needed for the main theorem.

\begin{lem}\label{suff shortening}
Fix any $\varphi<2\pi/3$. There exists $z>0$ and $s_0>0$ such that if $s<s_0$
and $c/s>z$, then $Sh(c)>s$.
\end{lem}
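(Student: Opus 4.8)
The plan is to get explicit hyperbolic-trigonometric estimates for the functions $a(c,\varphi)$ and $b(c,\varphi)$ that appear in $Sh(c,\varphi)=2c-2b(c,\varphi)-a(c,\varphi)$, and then to show that for $c$ large relative to $s$ (and $s$ small) the quantity $2c-2b$ alone already exceeds $s$, since $2c-2b$ grows without control on the ratio while $a$ stays bounded. First I would recall the setup from Figure~\ref{modify_pic}: the triod replaces two segments of length $c$ meeting at angle $\varphi$ by a segment of length $a$ together with two segments of length $b$, where $a,b,c$ are the sides of a hyperbolic triangle with the angle $2\pi/3$ opposite $c$ and an angle $\varphi/2$ at each of the two base vertices (the endpoints where the original segments of length $c$ terminated). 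So the hyperbolic law of sines gives $\sinh b/\sin(\varphi/2)=\sinh c/\sin(2\pi/3)$, i.e. $\sinh b = \tfrac{2}{\sqrt 3}\sin(\varphi/2)\sinh c$, and the law of cosines gives $\cosh c=\cosh a\cosh b+\tfrac12\sinh a\sinh b$.

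Next I would extract the asymptotics as $c\to\infty$ with $\varphi$ fixed. From $\sinh b = \tfrac{2}{\sqrt3}\sin(\varphi/2)\sinh c$ we get $e^{b}\sim \tfrac{2}{\sqrt3}\sin(\varphi/2)\,e^{c}$, so $b = c + \log\!\big(\tfrac{2}{\sqrt3}\sin(\varphi/2)\big) + o(1)$; in particular $c-b\to \log\!\big(\tfrac{\sqrt3}{2\sin(\varphi/2)}\big)=:\beta(\varphi)$, a finite constant. Feeding this into the law of cosines (both $\cosh$ and $\sinh$ of the large quantities behave like $\tfrac12 e^{(\cdot)}$) shows $\cosh a$ tends to a finite limit, hence $a(c,\varphi)\to \alpha(\varphi)$, a finite constant, as $c\to\infty$. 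The key qualitative point is that $c-b$ and $a$ are \emph{bounded} functions of $c$ (for fixed $\varphi$), while $c$ itself is not; I would record concrete bounds, say $c-b\le B_\varphi$ and $a\le A_\varphi$ valid for all $c>0$ (using monotonicity from Lemma~\ref{monotone} and the explicit limits to bound the tail, plus continuity on any compact interval of $c$). Then
\[
Sh(c,\varphi)=2c-2b-a=2(c-b)-a \;\ge\; 2(c-b)-A_\varphi.
\]
Wait — this already shows $Sh(c)\to 2\beta(\varphi)-\alpha(\varphi)$, a \emph{finite} limit, so $Sh(c)$ is bounded; hence the claim cannot be that $Sh(c)$ is large because $c$ is large. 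Rather, the hypothesis $c/s>z$ with $s<s_0$ should be read as: $s$ is small and $c$ is bounded below by $sz$, which keeps $c$ bounded below; I would instead note $Sh(c,\varphi)$ is continuous and strictly increasing in $c$ with $Sh(0)=0$ and positive limit at infinity, so it is bounded below by a positive constant once $c$ exceeds any fixed threshold, while for small $c$ we need the linear-in-$c$ behavior near $0$.

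So the real content is the behavior of $Sh(c,\varphi)$ as $c\to 0^+$: I would compute the leading term. With $\varphi$ fixed, expanding the law of sines gives $b\approx \tfrac{2}{\sqrt3}\sin(\varphi/2)\,c$ to first order, and the law of cosines then forces $a\to 0$ with $a = O(c)$ as well; carrying the linear terms, $a\approx \lambda(\varphi)c$ for some $\lambda(\varphi)>0$, so $Sh(c,\varphi)=\big(2-\tfrac{4}{\sqrt3}\sin(\varphi/2)-\lambda(\varphi)\big)c+o(c)=:\mu(\varphi)\,c+o(c)$, and $\mu(\varphi)>0$ because $Sh(c,\varphi)>0$ for all $c>0$ (the defining inequality $2c>2b+a$) together with $Sh$ being smooth. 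Thus there is $c_1>0$ with $Sh(c,\varphi)\ge \tfrac{\mu(\varphi)}{2}c$ for all $0<c\le c_1$, and there is $\delta>0$ with $Sh(c,\varphi)\ge \delta$ for all $c\ge c_1$. Now set $z:=\max\{2/\mu(\varphi),\,c_1/\delta\}$ and $s_0:=\delta$. If $s<s_0$ and $c/s>z$: either $c\le c_1$, in which case $Sh(c)\ge \tfrac{\mu(\varphi)}{2}c>\tfrac{\mu(\varphi)}{2}\cdot\tfrac{2}{\mu(\varphi)}\,s=s$; or $c>c_1$, in which case $Sh(c)\ge\delta>s$. Either way $Sh(c)>s$, as desired.

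The main obstacle I anticipate is purely computational: pinning down that $\mu(\varphi)>0$ — equivalently, that the linear coefficient $\lambda(\varphi)$ of $a$ in $c$ is strictly less than $2-\tfrac{4}{\sqrt3}\sin(\varphi/2)$ — cleanly, rather than by brute Taylor expansion of the law of cosines. I expect the slickest route is to avoid the explicit constant altogether: since $Sh(\cdot,\varphi)$ is $C^1$, strictly increasing, vanishes at $0$, and is positive on $(0,\infty)$, its right derivative at $0$ is $\ge 0$; ruling out the degenerate case $Sh'(0^+)=0$ can be done by a direct second-order check, but it is cleaner to observe that $Sh(c,\varphi)/c$ is itself monotone (or at least bounded below on $(0,c_1]$ by its infimum, which is positive by compactness once we know the limit as $c\to 0^+$ is positive — and that limit being positive is exactly $\mu(\varphi)=Sh'(0^+,\varphi)>0$). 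So the one genuinely necessary hyperbolic-trigonometry computation is the first-order expansion showing $Sh'(0^+,\varphi)>0$; everything else is soft (continuity, monotonicity from Lemma~\ref{monotone}, and a two-case argument).
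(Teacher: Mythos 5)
Your overall skeleton matches the paper's proof: reduce to showing $\lim_{c\to0^+}Sh(c,\varphi)/c>0$, deduce a linear lower bound $Sh(c)\ge yc$ on some interval $(0,c_1]$, and handle $c>c_1$ via the monotonicity of Lemma~\ref{monotone}. But the step that carries all the content --- strict positivity of $\mu(\varphi)=Sh'(0^+,\varphi)$ --- is not actually established in your plan, and the justification you offer is invalid: positivity of $Sh$ on $(0,\infty)$ together with smoothness only gives $Sh'(0^+)\ge 0$, not $>0$ (nothing you say rules out, say, $Sh(c)$ vanishing to second order at $0$). Your fallbacks do not close this either: the compactness remark is circular, since it presupposes that the limit of $Sh(c)/c$ at $0^+$ is positive, which is exactly what is in question; and the monotonicity of $c\mapsto Sh(c)/c$ is asserted without proof --- Lemma~\ref{monotone} gives monotonicity of $Sh$, not of $Sh(c)/c$. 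You correctly identify at the end that the one genuinely necessary computation is the first-order expansion showing $Sh'(0^+,\varphi)>0$, but the plan never performs it, so the lemma is not proved. (The long $c\to\infty$ detour in your first paragraph, which you retract, plays no role and should simply be deleted.)

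The paper closes exactly this gap by an explicit computation: differentiating $a(c,\varphi)$ and $b(c,\varphi)$ at $c=0$ gives
\[
Sh'(0^+,\varphi)=2-\tfrac32B-\tfrac12\sqrt{4-3B^2},\qquad B=\tfrac{2}{\sqrt3}\sin\!\left(\tfrac{\varphi}{2}\right),
\]
which is positive precisely when $B<1$, i.e. $\varphi<2\pi/3$ (the inequality rearranges to $12(B-1)^2>0$). Your own setup gets there quickly: in the Euclidean limit the third angle of the triangle is $\pi/3-\varphi/2$, so $\lambda(\varphi)=\tfrac{2}{\sqrt3}\sin(\pi/3-\varphi/2)=\cos(\varphi/2)-\tfrac1{\sqrt3}\sin(\varphi/2)$, hence $\mu(\varphi)=2-2B-\lambda(\varphi)=2-\tfrac32B-\tfrac12\sqrt{4-3B^2}$, and the positivity check is elementary. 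With that computation inserted, your two-case argument ($c\le c_1$ via the linear bound, $c>c_1$ via monotonicity) is essentially the paper's proof.
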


\begin{proof}
Suppose the following claim is true: there exists $c_0>0$ and
$y>0$ such that for all $c\leq c_0$, $Sh(c)/c\geq y$.
Let $z=1/y$; then for any positive $s$ and $c$ with $c\leq
c_0$ and $c/s\geq z$,
\begin{gather*}
\frac{Sh(c)}{c}\geq \frac1z \\
Sh(c)\geq\frac{c}{z}\geq s
\end{gather*}
Let $s_0=c_0/z$, and pick $s$ with $0<s<s_0$ and $c$ with $c/s>z$. Let $c'=zs$.
Then $c'<c_0$ and $c'/s=z$; so $Sh(c')\geq s$. Since $c>c'$, by
Lemma~\ref{monotone}, $Sh(c)>Sh(c')\geq s$.

To prove the claim, note that by continuity, it suffices to show that
$\lim_{c\to0}Sh(c)/c>0$. This limit is, by definition, the derivative of $Sh$
evaluated at 0. One can, without too much difficulty, write $b$ and $a$
explicitly as functions of $c$ (and $\varphi$), then take their derivatives at 0
to get that the derivative of $Sh$ evaluated at $c=0$ is
\[
2-\frac32B-\frac12\sqrt{4-3B^2}
\]
where $B=\frac{2}{\sqrt3}\sin\left(\frac{\varphi}{2}\right)$. This quatity is
easily seen to be positive. Showing that $Sh(c)>0$ for $c>0$ by this method is
much harder, which is why Lemma~\ref{monotone} uses a geometric argument for
that case.
\end{proof}

\begin{lem}\label{small subgraph}
Let $X$ be a finite, metric graph (e.g. a minimal length carrier graph).
For any $m,l_0>0$, if $X$ has an edge $e$ of length less than $l_0$, then $e$ is
contained in a connected subgraph $S$ with the following properties:
\begin{enumerate}
\item\label{long-edges} For every edge $e'$ adjacent to $S$, but not contained in $S$,
\[
\frac{\len(e')}{\len(S)}>m;
\]
\item If $|S|$ is the number of edges in $S$, then
\[
\len(S)<l_0(m+1)^{|S|-1}.
\]
\end{enumerate}
\end{lem}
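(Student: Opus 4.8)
The plan is to construct $S$ greedily: start from the short edge $e$ and repeatedly absorb any adjacent edge that is ``too short'' relative to the current subgraph, stopping when no such edge remains. Property~\ref{long-edges} will then hold by the stopping condition, and the length bound will be maintained as an invariant throughout the construction.

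First I would set $S_0=\{e\}$, which is connected, has $|S_0|=1$, and satisfies $\len(S_0)=\len(e)<l_0=l_0(m+1)^{|S_0|-1}$. Inductively, suppose $S_i$ has been built, is connected, contains $e$, and satisfies $\len(S_i)<l_0(m+1)^{|S_i|-1}$. If there is no edge $e'$ adjacent to $S_i$ but not contained in $S_i$ with $\len(e')/\len(S_i)\le m$, then Property~\ref{long-edges} holds for $S:=S_i$ and the construction stops. Otherwise pick such an $e'$ and set $S_{i+1}=S_i\cup\{e'\}$. Since $e'$ shares a vertex with $S_i$, the subgraph $S_{i+1}$ is still connected and still contains $e$; also $|S_{i+1}|=|S_i|+1$, and
\[
\len(S_{i+1})=\len(S_i)+\len(e')\le(1+m)\len(S_i)<l_0(m+1)^{|S_i|}=l_0(m+1)^{|S_{i+1}|-1},
\]
so both inductive hypotheses are preserved. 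Next I would note that each step strictly increases the number of edges and $X$ has only finitely many edges, so the process terminates after finitely many steps at some $S$ for which no absorbing edge exists; if it exhausts all of $X$, then $S=X$ has no edges adjacent to it outside itself and Property~\ref{long-edges} holds vacuously. The terminal $S$ is connected, contains $e$, and satisfies both conclusions.

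I do not expect a real obstacle here; the construction is a short greedy argument. The only points needing care are interpreting ``adjacent to $S$ but not contained in $S$'' so that absorbing such an edge preserves connectivity (which it does, since the edge meets $S$ in a vertex, even if it meets $S$ in two vertices), and keeping the exponent bookkeeping straight — starting from the single edge $e$ is exactly what makes the bound come out as $l_0(m+1)^{|S|-1}$ rather than $l_0(m+1)^{|S|}$.
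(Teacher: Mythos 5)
Your greedy construction is exactly the paper's proof: start with the single short edge, absorb an adjacent edge whenever one has length at most $m$ times the current total, verify the invariant $\len(S_i)<l_0(m+1)^{|S_i|-1}$ at each step, and stop by finiteness of the edge set. The bookkeeping and termination argument match the paper's, so the proposal is correct and essentially identical.
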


\begin{proof}
We will build $S$ inductively, one edge at a time, so that at each step, the
second condition is satisfied. We will stop once the first condition is also
satisfied. Let $e_1$ be an edge of $X$ with $\len(e_1)<l_0$, and let $S_1=e_1$.
Note that $S_1$ satisfies the second property of the lemma. Having defined $S_i$
(satisfying property 2), if the first condition of the lemma holds for $S_i$,
then set $S=S_i$ and stop. Otherwise, let $e_{i+1}$ be an edge adjacent to
$S_i$, but not in it, with $\len(e_{i+1})\leq m\,\len(S_i)$. Set
$S_{i+1}=S_i\cup e_{i+1}$. Note that
\begin{align*}
\len(S_{i+1})&\leq\len(S_i)+m\,\len(S_i)\\
&=(m+1)\,\len(S_i)\\
&<(m+1)l_0(m+1)^{i-1}\\
&<l_0(m+1)^{(i+1)-1}.
\end{align*}
So $S_{i+1}$ satisfies the second property, as well. Since $X$ has finitely many
edges, this process must eventually stop, yielding $S$.
\end{proof}


\section{Main result}\label{main-section}

Recall the statement of the main theorem:

\begin{thm_main}\label{main}
Suppose $M$ is a hyperbolic 3-manifold and $f\!:\!X\to M$ is a minimal length
carrier graph. Let $k=\mathrm{rank}\,\pi_1(M)$ and assume $k>1$. For every
$r>0$, there exists $l>0$ such that if every circuit in $X$ has length greater
than $r$, then every edge in $X$ has length at least $l$. The value of $l$
depends only on $r$ and $k$.
\end{thm_main}

By a \emph{circuit} in a graph, we mean a simple, closed curve. Note that each
circuit in $X$ represents an element of a minimal cardinality generating set for
$\pi_1(X)$, and thus for $\pi_1(M)$. We will refer to an element of a minimal
cardinality generating set as a \emph{basis element}. Thus, to meet the
criterion that every circuit in $X$ has length greater than $r$, it suffices for
every basis element of $M$ to have length greater than $r$.

\begin{proof}
Let $k=\mathrm{rank}\,\pi_1(X)$. For $\varphi=\cos^{-1}(-1/3)$, let $z$ and
$s_0$ be as in Lemma~\ref{suff shortening}. Choose $m>0$ big enough so that
$(1/2)(m-2)/(4k-5)>z$. Let $r>0$, suppose that every circuit in $X$ has length
greater than $r$, and set
\[
l=\min\left\{\frac{s_0}{(4k-5)(m+1)^{2k-4}},\frac{r}{(m+1)^{3k-4}}\right\}.
\]

Suppose $X$ has an edge $e$ with $\len_f(e)<l$. Lemma~\ref{small subgraph} says
that $e$ is contained in a subgraph $S\subset X$ with $\len(S)<l(m+1)^{|S|-1}$,
where $|S|$ is the number of edges in $S$, and for every edge $e'$ that touches
$S$ but is not contained in it, $\len(e')>m\len(S)$. For brevity, let
$L=\len(S)$. It is easy to see that $X$ has exactly $3k-3$ edges; so
$|S|\leq3k-3$ and hence, $L<l(m+1)^{3k-4}\leq r$. Since $X$ has no circuits with
length less than $r$, $S$ must be a tree. Then $|S|\leq (\text{\# vertices in }
X)-1=2k-3$, and $L<l(m+1)^{2k-4}$.

We are going to create a new carrier graph $Y$ that will be homeomorphic to the
graph obtained from $X$ by collapsing $S$ to a point. We first construct the
abstract graph $Y$. Let $R$ be the complement of the interior of $S$ in $X$.
Then $R\cap S$ consists of the vertices of $S$ that have at least one adjacent
edge not contained in $S$. We are going to modify $R$ by ``splitting'' each
vertex in $R\cap S$ that has two edges from $R$ attached to it. More precisely,
let $v$ be such a vertex and let $e_1$ and $e_2$ be the edges in $R$ that are
attached to it. Suppose that $e_1$ and $e_2$ are distinct edges and let $w_i$ be
the other endpoint of $e_i$. Remove $v$, $e_1$ and $e_2$ from $R$. For $i=1,2$,
add in a new vertex $v_i$ and an edge $e_i'$ connecting $w_i$ and $v_i$. If
$e_1=e_2$, then remove $e_1$ and $v$ and replace them with two new vertices
$v_1$ and $v_2$ and an edge connecting them. In either case, we will say that
$v_1$ and $v_2$ were split from $v$. Call the modified graph $R'$, and set
$Y=R'\cup C$. Note that $Y$ will have some vertices of valence 2 exactly at the
points in $R'\cap C$. See Figure~\ref{tree collapse}.

\begin{figure}
\begin{center}
	\scalebox{.65}{\includegraphics{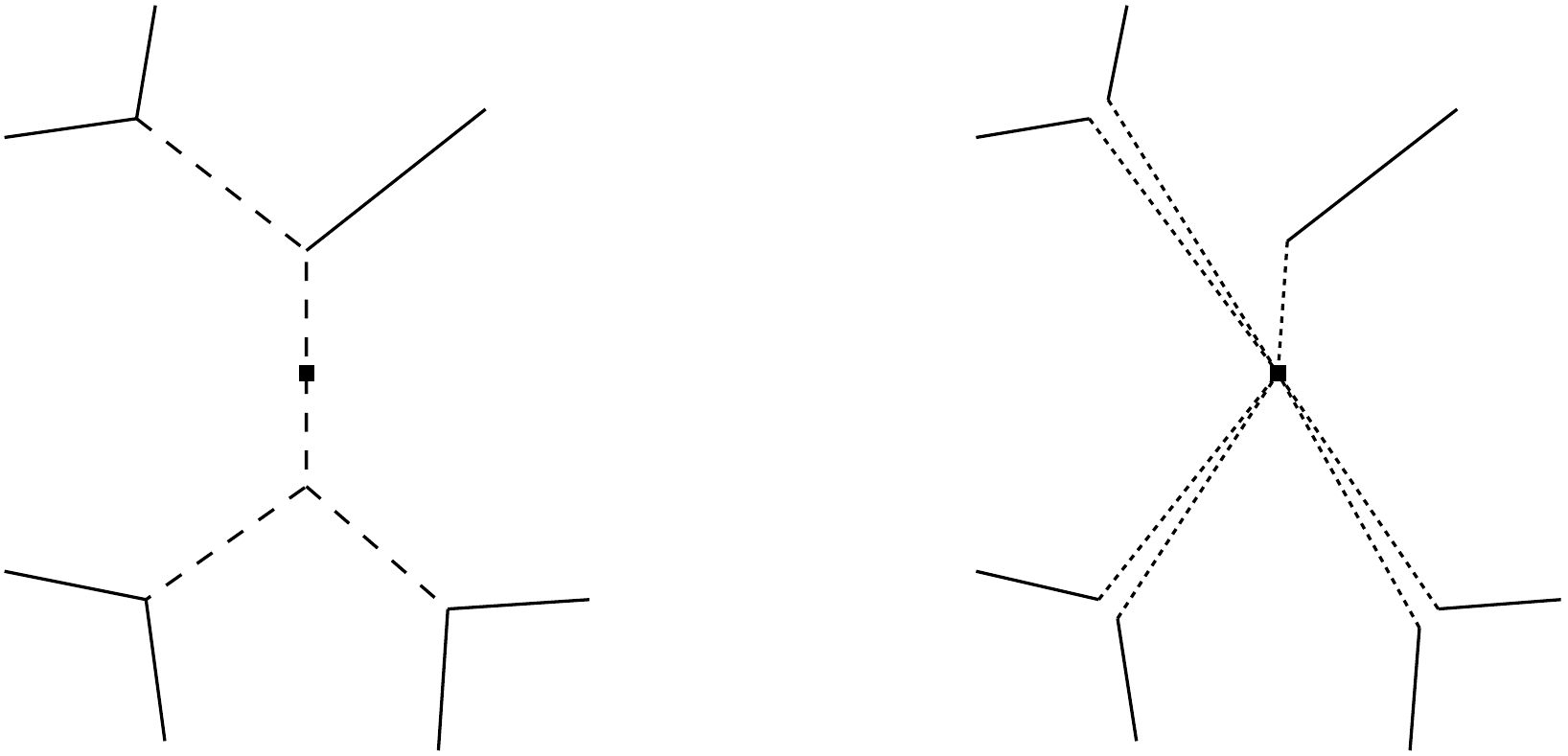}}
	\caption{Collapsing a tree. Dashed lines are $S$, dotted lines are $C$,
	solid lines are part of $R$.}\label{tree collapse}
\end{center}
\end{figure}

We now describe a map $g\!:\!Y\to M$. The graph $R$ is a quotient of $R'$
obtained by identifying pairs of vertices in $R'$ split from the same vertex in
$R$. On $R'$, define $g$ to be the composition of $f$ with the quotient map
$R'\to R$. Fix a nonvertex point $p\in S$, let $p'$ be the cone point of $C$,
and set $g(p')=f(p)$. If $[v',p']\subset C$ is an edge in $C$, let $v$ be the
endpoint in $R$ that $v'$ came from. There is a unique, injective path $[v,p]$
in $S$ from $v$ to $p$. Let $g$ map the edge $[v',p']$ to the path $f|_{[v,p]}$.
There will be some valence two vertices in $Y$ coming from the
endpoints of $R$. We will treat the two edges attached to such a vertex as a
single edge, and though we may still refer to these endpoints, they will not be
considered vertices. The map $g\!:\!Y\to M$ is still a carrier graph.

Notice that the length (with respect to $g$) of any edge in $C$ is less than or
equal to $L$. The number of such edges is at most twice the number of vertices
in $S$, and the number of vertices in $X$ is $2k-2$. Thus,
$\len_g(C)\leq(4k-4)L$. Since $Y$ was formed by replacing $S$ with $C$,
$\len_g(Y)\leq\len_f(X)+(4k-5)L$. Some of the edges in $Y$ map to non-geodesics,
so we replace $g$ by the map $h$, which is the same as $g$ on the vertices of
$Y$, but maps the edges to the geodesic arcs homotopic to the their images under
$g$. The lengths of edges with respect to $h$ will not be any longer than the
lengths with respect to $g$, so we have $\len_h(Y)\leq\len_f(X)+(4k-5)L$.

Our goal will be to show that we can apply the shortening procedure to $Y$ at
the point $h(p')$ to reduce its length by more than $(4k-5)L$, thereby making it
shorter than $X$. This will contradict $X$ being a minimal length carrier graph
and therefore, will show that $X$ cannot contain the short loop $e$.

Corollary~7.2 of~\cite{CS1} says that if $Q_1,\dots,Q_n\in\hh$ are distinct from
$P\in\hh$, then
\[
\sum_{1\le i<j\le n}\cos\angle(Q_i,P,Q_j) \ge -n/2.
\]
Since $S$ is a tree and has at least one edge, $p'$ has valence at least four.
If we lift $h$ on a small neighborhood of $p'$ to $\hh$ and apply this corollary
to any four edges, we get that there are two edges attached to $p'$ with angle
at most $\cos^{-1}(-1/3)$ between them. Let $\eta_1$ and $\eta_2$ be two such
edges and let $\varphi\leq\cos^{-1}(-1/3)$ be the angle between them. Note that
we could have $\eta_1=\eta_2$, if both endpoints from this edge are at $p'$.

We can get a lower bound for the lengths of the edges attached to $p'$. Let
$e_0$ be an edge attached to $p'$ and suppose $e_0$ has only one endpoint at
$p'$. Then $e_0$ can be written as $e_0=e_1\cup e_2$, where $e_1\subset
R'$ and $e_2\subset C$. From the construction of $Y$ and $g$, we see that there
is some edge $e_1'\subset X$ that touches $S$ but is not contained in it, such
that $g(e_1)=f(e_1')$. Hence, $\len_g(e_1)=\len_f(e_1')>mL$. Also, since $e_2$
is an edge in $C$, $\len_g(e_2)\leq L$. Under $h$, the image of $e_0$ comes from
straightening $g(e_1\cup e_2)$ into a geodesic. Applying the triangle
inequality, we get that $\len_h(e_0)\geq(m-1)L$. Now suppose $e_0$ has both
endpoints at $p'$. The argument here is similar: we write $e_0=e_1\cup e_2\cup e_3$,
where $e_1$ and $e_3$ are edges in $C$ and have length at most $L$, and $e_2$
is in $R'$ and has length at least $mL$. With two applications of the triangle
inequality, we get $\len_h(e_0)\geq(m-2)L$.

We are going to apply the shortening procedure to the edges $\eta_1$ and
$\eta_2$. If these are distinct edges, then we can use an edge segments of
length $\min\{\len(\eta_1),\len(\eta_2)\}\geq(m-1)L$ from each. If
$\eta_1=\eta_2$, then each edge segment can use up to half of the edge. Thus, we
can use edge segments of length at least $(1/2)(m-2)L$. Let $c$ be the length of
the longest edge segments in $\eta_1$ and $\eta_2$ that we can do the shortening
procedure on. We have $c\geq(1/2)(m-2)L$. The reduction in the length of $Y$
coming from doing the shortening procedure on $\eta_1$ and $\eta_2$ is
$Sh(c,\varphi)$. From Lemma~\ref{monotone}, we have
\[
Sh(c,\varphi)\geq Sh((1/2)(m-2)L,\cos^{-1}(-1/3))>(4k-5)L.
\]
The last inequality comes from Lemma~\ref{suff shortening} since, by our choices
of $m$ and $L$,
\begin{gather*}
\frac{(1/2)(m-2)L}{(4k-5)L}>z \\
\intertext{and}
(4k-5)L<(4k-5)l(m+1)^{2k-4}\leq s_0.
\end{gather*}

After the shortening procedure we will have a carrier graph shorter than the
minimal length carrier graph $X$, which is a contradiction. This implies that
no edge of $X$ can have length less than $l$.
\end{proof}

\section{Short loops, long generators}\label{example}

An immediate corollary of Theorem~\ref{main} is that a lower bound on
injectivity radius gives a lower bound on the lengths of edges in the minimal
length carrier graph, since there are no basis elements with length less than
twice the injectivity radius. It is possible, \emph{a priori}, that
Theorem~\ref{main} does not give any more information than that corollary. In
other words, it may be that if $M$ has a small injectivity radius, then it must
have a short basis element, too. We now give an example of a sequence of closed
manifolds for which the injectivity radius goes to zero, but for which there is
a lower bound on the length of a basis element and, thus, a lower bound on the
length of any edge in their minimal length carrier graphs. This example was
suggested independently by Ian Agol, Ian Biringer, and Juan Souto.

Let $H$ be a genus 2 handlebody, and let $DH$ be the double of $H$. Let
$\gamma\subset\partial H$ be an essential, separating, simple, closed curve
sufficiently complicated to make $H\setminus\gamma$ have a complete hyperbolic
structure, in which a neighborhood of $\gamma$ is a cusp. Then
$M_{\infty}=DH\setminus\gamma$ is a cusped, finite volume, hyperbolic
3-manifold. Let $M_n=H\cup_{g^n}H$, where $g:\partial H\to\partial H$ is Dehn
twist along $\gamma$. Notice that $M_n$ can be obtained from $DH$ by $1/n$ Dehn
surgery on a neighborhood of $\gamma$; call the core of the filling torus
$\gamma_n$. It is clear that the map $g^n$ acts trivially on $H_1(\partial H)$,
so one can easily check that $H_1(M_n)=\mathbb{Z}^2$. Since there is an obvious
genus 2 Heegaard splitting of $M_n$, we have $\mathrm{rank}\,\pi_1(M_n)=2$.
According to Thurston's hyperbolic Dehn filling theorem (see~\cite{PP}), the
sequence $M_n$ converges geometrically to $M_{\infty}$. Thus, the minimal
injectivity radius of $M_n$ approaches zero. Also, for large enough $n$, any
sufficiently short (nontrivial) curve must be contained in the Margulis tube
around $\gamma_n$, so that in $\pi_1(M)$, it represents a power of $\gamma_n$.
Hence, we only need to show that $\gamma_n$ cannot be a basis element. Note
that $\gamma_n$ and $\gamma$ are freely homotopic in $M_n$, and $\gamma$ is
trivial in $H_1(M_n)$, since it is a separating curve in $\partial H$. Any
generating pair for $\pi_1(M_n)$ must descend to a pair of generators for
$H_1(M_n)=\mathbb{Z}^2$, so neither $\gamma_n$ nor any sufficiently short curve
can be a basis element.

\noindent\textsc{Department of Mathematics, Statistics, and Computer Science (M/C 249),
University of Illinois at Chicago, 851 S. Morgan St., Chicago, IL 60607-7045}\\

\noindent\emph{E-mail address}: \texttt{wsiler2@uic.edu}

\end{document}